\definecolor{fgcolor}{rgb}{0.345, 0.345, 0.345}
\definecolor{hlnum}{rgb}{0.686,0.059,0.569}
\definecolor{hlstr}{rgb}{0.192,0.494,0.8}
\definecolor{hlcom}{rgb}{0.678,0.584,0.686}
\definecolor{hlopt}{rgb}{0,0,0}
\definecolor{hlstd}{rgb}{0.345,0.345,0.345}
\definecolor{hlkwa}{rgb}{0.161,0.373,0.58}
\definecolor{hlkwb}{rgb}{0.69,0.353,0.396}
\definecolor{hlkwc}{rgb}{0.333,0.667,0.333}
\definecolor{hlkwd}{rgb}{0.737,0.353,0.396}
\definecolor{shadecolor}{rgb}{0.969, 0.969, 0.969}
\tiny\color{hlstd},
\numberwithin{equation}{section}            
\newtheorem{thm}{Theorem}[section]          
\def\footnotenonum{\xdef\@thefnmark{}\@footnotetext}
\DeclareMathOperator*{\argmin}{arg\,min}
\newcommand{\sqla}{\xymatrix{&\ar@{~>}[l]}}
\newcommand{\sqra}{\xymatrix{\ar@{~>}[r]&}}
\theoremstyle{plain}                             
\newtheorem{definition}[thm]{Definition}
\newtheorem{lemma}[thm]{Lemma}
\newtheorem{proposition}[thm]{Proposition}
\newtheorem{corollary}[thm]{Corollary}
\theoremstyle{definition}
\begin{document}
\title{Convex Techniques for Model Selection}
\author[Tran]{Dustin Tran}
\date{May 16, 2014}

\begin{abstract}
We develop a robust convex algorithm to select the regularization parameter in model selection. In practice this would be automated in order to save practitioners time from having to tune it manually. In particular, we implement and test the convex method for $K$-fold cross validation on ridge regression, although the same concept extends to more complex models. We then compare its performance with standard methods.
\end{abstract}
\maketitle

\section{Introduction}
Regularization has been studied extensively since the theory of ill-posed problems \cite{iv}. By adding a penalty associated to a choice of regularization parameter, one penalizes overfitted models and can achieve good generalized performance with the training data.
This has been a crucial feature in current modeling frameworks: for example, Tikhonov regularization \cite{ta}, Lasso regression \cite{ha}, smoothing splines \cite{sl}, regularization networks \cite{bm}, SVMs \cite{vv}, and LS-SVMs \cite{sj}.
SVMs in particular are characterized by dual optimization reformulations, and their solutions follow from convex programs. Standard SVMs reduce
to solving quadratic problems and LS-SVMs reduce to solving a set of linear equations.

Many general purpose methods to measure the appropriateness of a regularization parameter for given data exist: cross-validation (CV), generalized CV, Mallows's $C_p$, Minimum Description Length (MDL), Akaike Information Criterion (AIC), and Bayesian Information Criterion (BIC).
Recent interest has also been in discovering closed-form expressions of the solution path \cite{eb}, and developing homotopy methods \cite{om}.

Like SVMs, this paper takes the perspective of convex optimization---following \cite{bs}---in order to tune the regularization parameter for optimal model selection.
Classical Tikhonov regularization schemes require two steps: 1. (training) choosing a grid of fixed parameter values, find the solution for each constant regularization parameter; 2. (validating) optimize over the regularization constants and choose the model according to a model selection criterion. The approach in this paper reformulates the problem as one for constrained optimization. This allows one to compute both steps simultaneously:
minimize the validation measure subject to the training equations as constraints.

\hspace{-1em}\includegraphics[scale=0.40]{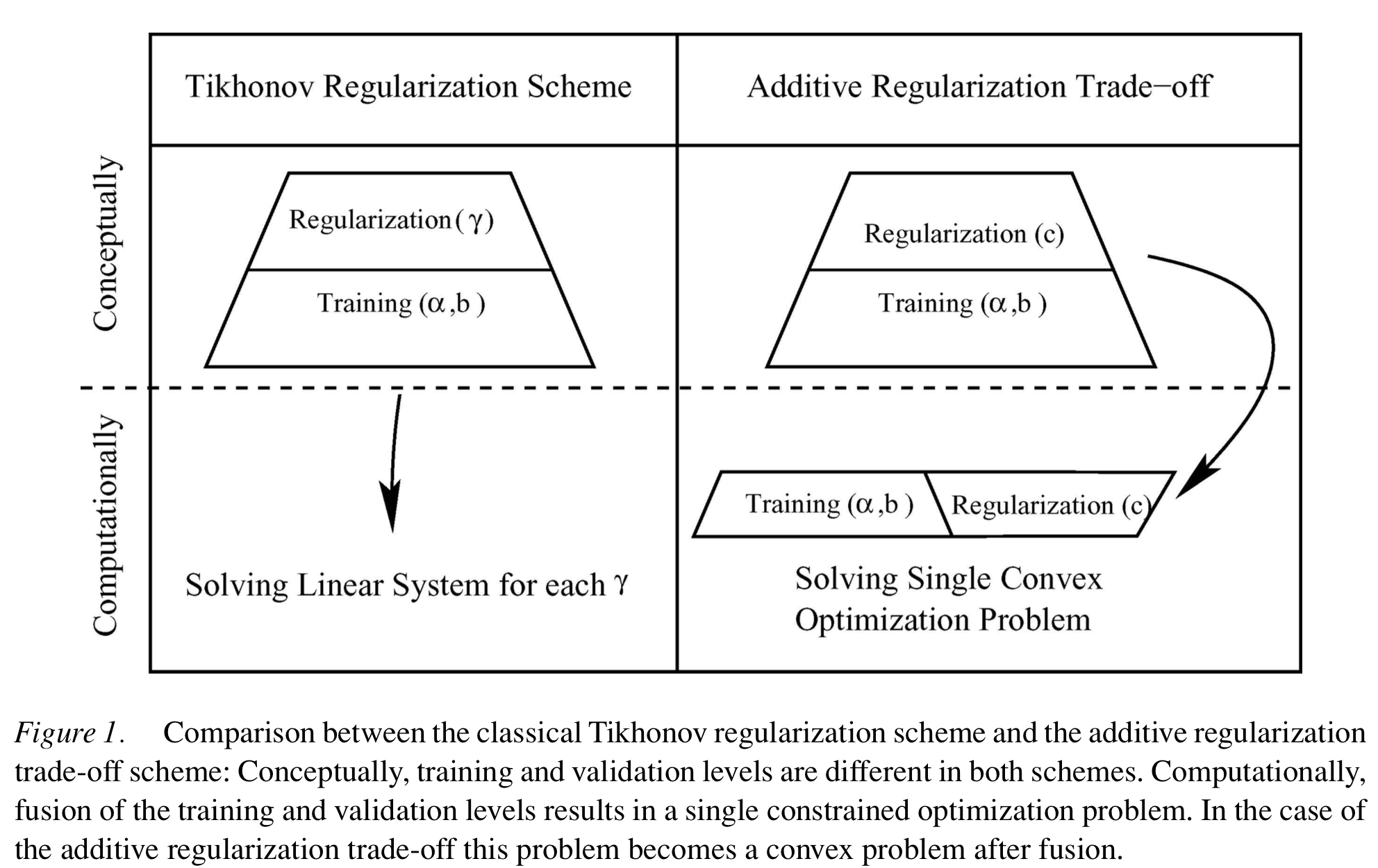}

This approach offers noticeable advantages over the ones above, of which we outline a few here:
\begin{itemize}
\item \textbf{Automation}: Practical users of machine learning tools may not be not interested in
tuning the parameter manually. This brings us closer to fully automated algorithms.
\item
\textbf{Convexity}: It is (usually) much easier to examine worst case behavior for convex sets, rather than attempting
to characterize all possible local minima.
\item
\textbf{Performance}:
The algorithmic approach of training and validating simultaneously is occasionally more efficient than general purpose optimization routines.
\end{itemize}

For this write-up, we focus only on ridge regression, although the same approach can be applied to more complex model selection problems (which may benefit more as they suffer more often from local minima).
\section{Background} 

\subsection{Ridge solution set}
We introduce the standard approach to ridge regression and prove key properties for a convex reformulation. For a more rigorous introduction, see \cite{gg}. Let $y\in\mathbb{R}^n$ and $M$ be a $n\times n$ positive semi-definite matrix.
For a fixed
$\gamma\in(0,\infty)$,
recall that Tikhonov regularization schemes of linear operators lead to the solution
$\hat{\beta}\in\mathbb{R}^n$, where the estimator $\hat{\beta}$ solves
\begin{equation}
(M + \gamma I_n)\beta = y
\end{equation}
\begin{definition}\it
For a fixed value $\gamma_0>0$, we define the \textit{ridge solution set} $S$ as the set of all solutions
$\hat{\beta}$
corresponding to a value
$\gamma\in(\gamma_0,\infty)$. That is,
\begin{equation}
S(\gamma,\beta|\gamma_0,M,y)
:= \left\{ \gamma\in(\gamma_0,\infty),~\beta\in\mathbb{R}^n | (M+\gamma I_n)\beta=y\right\}
\end{equation}
\end{definition}
The value $\gamma_0$ can be thought of as the minimal regularization parameter allowed in the solution set; this would speed up computation should the user already know a lower bound on their optimal choice of $\gamma$.

Let
$M=U\Sigma U^T$
denote the reduced singular value decomposition (SVD) of $M$, i.e., $U$ is orthogonal and $\Sigma=\mathrm{diag}(\sigma_1,\ldots,\sigma_n)$ contains all the ordered positive eigenvalues with $\sigma_1\geq\cdots\geq\sigma_n$.

\begin{lemma}\label{lemma}
The solution function $h_{M,y}(\gamma) = (M + \gamma I_n)^{-1}y$ is Lipschitz continuous.
\end{lemma}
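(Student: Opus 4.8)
The plan is to produce an explicit Lipschitz constant using the resolvent identity, so that no appeal to abstract theorems is needed. First I would record the basic estimate on the domain $(\gamma_0,\infty)$: since $M$ is positive semi-definite, the matrix $M+\gamma I_n$ has smallest eigenvalue at least $\gamma$ for every $\gamma>0$, hence it is invertible with spectral-norm bound $\|(M+\gamma I_n)^{-1}\|_{\mathrm{op}}\le 1/\gamma<1/\gamma_0$ whenever $\gamma>\gamma_0$. In particular $h_{M,y}$ is well defined on $(\gamma_0,\infty)$, and the constraint $\gamma_0>0$ is exactly what keeps this bound uniform.

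Next, for $\gamma,\gamma'\in(\gamma_0,\infty)$ I would apply the resolvent identity $A^{-1}-B^{-1}=A^{-1}(B-A)B^{-1}$ with $A=M+\gamma I_n$ and $B=M+\gamma' I_n$. Because $B-A=(\gamma'-\gamma)I_n$, this collapses to
\[
h_{M,y}(\gamma)-h_{M,y}(\gamma')=(\gamma'-\gamma)\,(M+\gamma I_n)^{-1}(M+\gamma' I_n)^{-1}y .
\]
Taking norms, using submultiplicativity of the operator norm, and inserting the bound from the first step gives
\[
\|h_{M,y}(\gamma)-h_{M,y}(\gamma')\|\le\frac{\|y\|}{\gamma_0^{2}}\,|\gamma-\gamma'| ,
\]
which is the claim, with explicit constant $\|y\|/\gamma_0^{2}$.

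An equivalent and more transparent route is to diagonalize: writing $M=U\Sigma U^{T}$ with $U$ orthogonal and putting $z=U^{T}y$, the $i$-th coordinate of $h_{M,y}(\gamma)$ is the scalar function $\gamma\mapsto z_i/(\sigma_i+\gamma)$, with derivative $-z_i/(\sigma_i+\gamma)^{2}$ bounded in absolute value by $|z_i|/\gamma_0^{2}$ on $(\gamma_0,\infty)$; summing squares and applying the mean value inequality recovers the same constant. I do not expect a genuine obstacle here — the entire content is in picking the resolvent identity (or the diagonalization) so that the $\gamma$-dependence becomes a scalar difference. The one point worth flagging is that the hypothesis $\gamma_0>0$ is used in an essential way: on the full half-line $(0,\infty)$ the derivative grows like $\gamma^{-2}$ as $\gamma\to 0^{+}$, so $h_{M,y}$ would be only locally Lipschitz there, not globally.
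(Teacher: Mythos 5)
Your proposal is correct, and your primary route --- the resolvent identity $A^{-1}-B^{-1}=A^{-1}(B-A)B^{-1}$ --- is genuinely different from the paper's argument. The paper diagonalizes $M=U\Sigma U^T$, reduces the difference to $\max_i\left|\tfrac{1}{\sigma_i+\gamma_1}-\tfrac{1}{\sigma_i+\gamma_2}\right|\,\|y\|_2$, and then applies the mean value theorem to the scalar map $x\mapsto 1/(\sigma_n+\gamma_0+x)$; this is essentially your second, ``more transparent'' route, so that part of your write-up coincides with the paper. The resolvent-identity version is cleaner: it needs no spectral decomposition, immediately yields the explicit constant $\|y\|_2/\gamma_0^2$ from the uniform bound $\|(M+\gamma I_n)^{-1}\|_{\mathrm{op}}\le 1/\gamma_0$, and generalizes verbatim to any operator family of the form $M+\gamma I$ on a Hilbert space. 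What the paper's diagonalization buys in exchange is a slightly sharper constant, $\|y\|_2/(\sigma_n+\gamma_0)^2$ (the displayed $(\sigma_n^2+\lambda_0)^2$ is evidently a typo for this), which improves on $\|y\|_2/\gamma_0^2$ whenever $\sigma_n>0$; it also sets up the eigenvalue reparameterization $\lambda_i=1/(\sigma_i+\gamma)$ that the rest of the paper is built on. Your closing observation that $\gamma_0>0$ is essential --- global Lipschitz continuity fails on all of $(0,\infty)$ when $\sigma_n=0$ --- is a point the paper leaves implicit and is worth making explicit.
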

\begin{proof}
For two values $\gamma_1, \gamma_2$ such that $\gamma_0\leq \gamma_1\leq \gamma_2 < \infty$,
\begin{align}
\|(M+\gamma_1 I_n)^{-1}y - (M + \gamma_2 I_n)^{-1}y\|_2
&= \left\|U\Big((\Sigma + \gamma_1 I_n)^{-1} - (\Sigma + \gamma_2 I_n)^{-1}\Big)U^Ty\right\|_2\\
&\leq \max_i \left|\frac{1}{\sigma_i + \gamma_1} - \frac{1}{\sigma_i + \gamma_2}\right|\|y\|_2\\
&\leq \left|\frac{1}{\sigma_n + \gamma_1} - \frac{1}{\sigma_n + \gamma_2}\right|\|y\|_2
\end{align}
Consider the function
\begin{equation}
g(x) = \frac{1}{\sigma_n + \gamma_0 + x}
\end{equation}
Then by the mean value theorem, there exists a value $c\in[\lambda_1 - \lambda_0,\lambda_2 - \lambda_0]$ such that
\begin{align}
\left|\frac{1}{\sigma_n + \lambda_1} - \frac{1}{\sigma_n + \lambda_2}\right|\|y\|_2
&\leq |g'(c)||\lambda_1 - \lambda_2|\|y\|_2\\
&\leq \frac{\|y\|_2}{(\sigma_n^2 + \lambda_0)^2}|\lambda_1 - \lambda_2|
\qedhere
\end{align}
\end{proof}

\subsection{Convex relaxation}
We now examine the the convex hull of the solution set $S(\gamma,\beta|\gamma_0,M,y)$. This will allow us to search efficiently through a convex set of hypotheses as in Section \ref{3}. We decompose the solution function into a sum of low rank matrices:
\begin{equation}
h_{M,y}(\gamma)
= (M + \gamma I_n)^{-1}y
= U(\Sigma + \gamma I_n)^{-1} U^T y
= \sum_{i=1}^n \frac{1}{\sigma_i + \gamma} U_iU_i^Ty
\end{equation}
Then define
$\lambda_i:=1/(\sigma_i + \gamma)$ for all $i=1,\ldots,n$.
The following proposition provides
linear constraints on the set
of $\lambda_i$'s following from this reparameterization.

\begin{proposition}
Let $\sigma_i':=\sigma_i+\gamma_0$
 for $i=1,\ldots,n$. Then the polytope $\mathcal{R}$ parametrized by $\Lambda=\{\lambda_1,\ldots,\lambda_n\}$ as follows
\begin{equation}\label{eqn:cvhull}
\mathcal{R}(\Lambda,\beta|\gamma_0,M,y) = \begin{cases}
U_i^T \beta=\lambda_iU_i^Ty, &\text{ for all }i=1,\ldots,n\\
0<\lambda_i\leq\frac{1}{\sigma_i'}, &\text{ for all }i=1,\ldots,n\\
\frac{\sigma_{i+1}'}{\sigma_i'}\lambda_{i+1} \leq \lambda_i \leq \lambda_{i+1}, &\text{ for all }\sigma_i' \geq \sigma_{i+1}',~i=1,\ldots,n-1
\end{cases}
\end{equation}
is convex, and moreover, forms a convex hull to $S$.
\end{proposition}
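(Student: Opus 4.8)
The strategy is to prove two containments: first that $S \subseteq \mathcal{R}$ (every genuine ridge solution satisfies the listed constraints), and second that $\mathcal{R}$ is contained in the convex hull of $S$, i.e. every point of $\mathcal{R}$ is a convex combination of points of $S$ (or a limit of such). Convexity of $\mathcal{R}$ is immediate since it is cut out by finitely many linear equalities and inequalities in the variables $(\lambda_1,\dots,\lambda_n,\beta)$, so the real content is the two-sided characterization of the hull.

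**Step 1: $S \subseteq \mathcal{R}$.**
Take $(\gamma,\beta)\in S$, so $(M+\gamma I_n)\beta = y$. Left-multiplying by $U_i^T$ and using $MU_i = \sigma_i U_i$ gives $(\sigma_i+\gamma)U_i^T\beta = U_i^T y$, hence $U_i^T\beta = \lambda_i U_i^T y$ with $\lambda_i = 1/(\sigma_i+\gamma)$; this is the first block of constraints. Since $\gamma > \gamma_0$ we get $\sigma_i+\gamma > \sigma_i' > 0$, so $0 < \lambda_i < 1/\sigma_i'$, the second block. For the third block, note $\lambda_i/\lambda_{i+1} = (\sigma_{i+1}+\gamma)/(\sigma_i+\gamma)$; since $\sigma_i \geq \sigma_{i+1}$ this ratio lies in $[\,?,1]$, and a short computation shows it is at least $(\sigma_{i+1}+\gamma_0)/(\sigma_i+\gamma_0) = \sigma_{i+1}'/\sigma_i'$ because the map $\gamma\mapsto (\sigma_{i+1}+\gamma)/(\sigma_i+\gamma)$ is increasing in $\gamma$ when $\sigma_i\geq\sigma_{i+1}$. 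This gives $\frac{\sigma_{i+1}'}{\sigma_i'}\lambda_{i+1} \le \lambda_i \le \lambda_{i+1}$.

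**Step 2: $\mathcal{R} \subseteq \operatorname{conv}(S)$.**
Here is where the main work lies. Given $(\Lambda,\beta)\in\mathcal{R}$, the equality constraints force $\beta$ to be determined by $\Lambda$ via $\beta = \sum_i \lambda_i U_i U_i^T y$ (at least on the span of the $U_i$; one should address the orthogonal complement, where both sides vanish or where a dimension count is needed). So it suffices to show the vector $\Lambda$ lies in the convex hull of the curve $\{(1/(\sigma_1+\gamma),\dots,1/(\sigma_n+\gamma)) : \gamma \in (\gamma_0,\infty)\}$. The inequality constraints are exactly the pairwise constraints one reads off from consecutive coordinates of that curve, so the claim is that these pairwise (two-dimensional projection) constraints already carve out the full convex hull. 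I would argue this by induction on $n$, or more cleanly by checking that every vertex of the polytope $\mathcal{R}$ (projected to the $\Lambda$-coordinates) is a point of the curve $S$: a vertex is obtained by making $n$ of the inequality constraints tight, and one shows that any such tight selection either is infeasible or pins down a single $\gamma$ with $\lambda_i = 1/(\sigma_i+\gamma)$. Since a polytope is the convex hull of its vertices, this finishes the inclusion.

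**Expected main obstacle.**
The delicate point is Step 2: verifying that the pairwise ratio constraints are not merely necessary but *sufficient* to describe the hull — i.e. that no higher-order (three-or-more-coordinate) inequalities are needed, and that degenerate cases $\sigma_i' = \sigma_{i+1}'$ (repeated eigenvalues, where the third constraint block is vacuous and the corresponding $\lambda_i,\lambda_{i+1}$ should simply be forced equal) are handled correctly. A clean way to sidestep a messy vertex enumeration is to produce, for an arbitrary feasible $\Lambda$, an explicit expression of it as a convex combination of two points on the curve by a greedy/peeling argument on the coordinate with the extreme ratio; I would present whichever of the two routes (vertex analysis vs. explicit convex decomposition) turns out shorter. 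One should also note the hull is the *closure*-free version only up to the boundary behavior as $\gamma\to\infty$ (where $\lambda_i\to 0$), so strictly the statement is that $\mathcal{R}$ equals the closed convex hull of $S$.
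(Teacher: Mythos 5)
Your Step 1, together with the remark that $\mathcal{R}$ is cut out by linear (in)equalities and hence is a convex polytope, is essentially the entirety of the paper's own proof: the paper verifies the second block via monotonicity of $g$, verifies the third block by the algebraic identity $\lambda_i=\lambda_{i+1}/(1+\lambda_{i+1}(\sigma_i'-\sigma_{i+1}'))$, and then concludes only that $\mathcal{R}$ is a \emph{convex relaxation} of $S$, i.e.\ $\operatorname{conv}(S)\subseteq\mathcal{R}$. Your minor variant (monotonicity of $\gamma\mapsto(\sigma_{i+1}+\gamma)/(\sigma_i+\gamma)$) is fine and equivalent.

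The genuine problem is your Step 2, which you correctly identify as the real content of the phrase ``forms a convex hull'' but which you leave as a sketch --- and which in fact cannot be completed, because the inclusion $\mathcal{R}\subseteq\operatorname{conv}(S)$ is false. Take $n=2$ with $\sigma_1'>\sigma_2'$ and set $c:=\sigma_1'-\sigma_2'>0$. Eliminating $\gamma$ from the curve gives $\lambda_2=\lambda_1/(1-c\lambda_1)$, a strictly convex increasing function of $\lambda_1$ on $(0,1/\sigma_1']$ whose graph runs from $(0,0)$ to $(1/\sigma_1',1/\sigma_2')$ and satisfies $\lambda_2>\lambda_1$ strictly for $\lambda_1>0$. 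Its closed convex hull is the region \emph{between} the graph and the chord $\lambda_2=(\sigma_1'/\sigma_2')\lambda_1$; in particular every point of the hull with $\lambda_1>0$ has $\lambda_2\geq\lambda_1/(1-c\lambda_1)>\lambda_1$. Yet the point $\lambda_1=\lambda_2=\epsilon$ (with $\beta$ defined by the equality constraints) satisfies all three blocks of \eqref{eqn:cvhull} for small $\epsilon>0$, so it lies in $\mathcal{R}$ but not in the closed convex hull of $S$. This also kills your proposed vertex argument: the vertex of $\mathcal{R}$ at $\lambda_1=\lambda_2=1/\sigma_1'$ is feasible but is not a point of the curve, so $\mathcal{R}$ is not the convex hull of curve points. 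The correct statement --- and the only one the paper actually proves --- is that $\mathcal{R}$ is a convex superset of $S$ (hence of $\operatorname{conv}(S)$) whose distance to $S$ is controlled by the subsequent proposition; if you restrict yourself to proving that, your Step 1 already suffices and Step 2 should be dropped rather than repaired.
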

\begin{proof}
It is easy to verify the first two constraints by looking at the function $g$
defined previously, which is strictly increasing.
Set $\gamma':= \gamma-\gamma_0>0$, and $\lambda_i=1/(\sigma_i'+\gamma')$ as before. Then for all $\sigma_i'\geq\sigma_{i+1}'$,
\begin{align}
\lambda_i
&=
\Big(\frac{1}{\sigma_{i+1}' + \gamma'}\Big)/\Big(\frac{\sigma_{i+1}' + \gamma' + \sigma_{i}' - \sigma_{i+1}'}{\sigma_{i+1}' + \gamma'}\Big)\\
&= \frac{\lambda_{i+1}}{1+\lambda_{i+1}(\sigma_i' - \sigma_{i+1}')}\\
&\geq \frac{\lambda_{i+1}}{1+\frac{1}{\sigma_{i+1}'}(\sigma_i' - \sigma_{i+1}')}
= \frac{\sigma_{i+1}'}{\sigma_i'}\lambda_{i+1}
\end{align}
Hence the third inequality is also true.
Moreover, the set
$\mathcal{R}(\Lambda,\beta|\gamma_0,M,y)$
is characterized entirely by equalities and inequalities which are linear
in the unknown $\lambda_i$'s. So it forms a polytope. Then by the above result together with the convex property of polytopes, it follows that
$\mathcal{R}(\Lambda,\beta|\gamma_0,M,y)$
is a convex
relaxation to the set
$S(\gamma,\beta|\gamma_0,M,y)$.
\end{proof}

We now find the relationship between solutions in $S(\gamma,\beta|\gamma_0,M,y)$ and solutions in its convex relaxation $\mathcal{R}(\Lambda,\beta|\gamma_0,M,y)$.
\begin{proposition}
The maximal distance between a solution
in
$\mathcal R(\Lambda,\beta|\gamma_0,M,y)$
and its closest counterpart in
$S(\gamma,\beta|\gamma_0,M,y)$
is bounded by the maximum range of the inverse eigenvalue spectrum:
\begin{equation}
\min_\gamma
\|U\Sigma U^Ty - (M + \gamma I_n)^{-1}y\|_2
\leq
{{\|y\|_2}}
\max_{i> j} \left| \frac{1}{\sigma_i'} - \frac{1}{\sigma_j'}\right|
\end{equation}
\end{proposition}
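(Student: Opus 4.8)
The plan is to peel off a $\|y\|_2$ factor with an $\ell^2$–$\ell^\infty$ estimate, reducing the claim to a scalar statement about the reparametrizing variables $\lambda_i$, and then to write down one explicit value of $\gamma$ that realizes the asserted distance.

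First I would fix an arbitrary point of the polytope $\mathcal R(\Lambda,\beta|\gamma_0,M,y)$; by the first family of constraints in \eqref{eqn:cvhull} such a point has the form $\beta=\sum_{i=1}^n\lambda_iU_iU_i^Ty$ with $\lambda_1,\dots,\lambda_n$ subject to the remaining (box and ordering) constraints, while a competing point of $S$ is $(M+\gamma I_n)^{-1}y=\sum_{i=1}^n\tfrac1{\sigma_i+\gamma}U_iU_i^Ty$. Since the columns of $U$ are orthonormal,
\[
\big\|\beta-(M+\gamma I_n)^{-1}y\big\|_2^2=\sum_{i=1}^n\Big(\lambda_i-\tfrac1{\sigma_i+\gamma}\Big)^2(U_i^Ty)^2\le\Big(\max_i\big|\lambda_i-\tfrac1{\sigma_i+\gamma}\big|\Big)^2\|y\|_2^2 ,
\]
using $\sum_i(U_i^Ty)^2=\|U^Ty\|_2^2=\|y\|_2^2$. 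Hence it suffices to exhibit one admissible $\gamma$, equivalently one $\gamma'=\gamma-\gamma_0>0$, with $\max_i\big|\lambda_i-1/(\sigma_i'+\gamma')\big|\le\max_{i>j}\big|1/\sigma_i'-1/\sigma_j'\big|$.

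Next I would change variables to $\tau_i:=1/\lambda_i-\sigma_i'$, so that $\lambda_i=1/(\sigma_i'+\tau_i)$ and $S$ is exactly the locus $\tau_1=\dots=\tau_n$. The three constraint families of \eqref{eqn:cvhull} become $\tau_i\ge0$, $\sigma_i'+\tau_i\ge\sigma_{i+1}'+\tau_{i+1}$, and $\tau_{i+1}\ge\tfrac{\sigma_{i+1}'}{\sigma_i'}\tau_i$; telescoping the last two chains from $1$ to $i$ yields the envelope
\[
\tfrac{\sigma_i'}{\sigma_1'}\,\tau_1\;\le\;\tau_i\;\le\;\tau_1+(\sigma_1'-\sigma_i'),\qquad i=1,\dots,n .
\]
I would then take $\gamma':=\tau_1$ (if $\tau_1=0$, i.e.\ $\lambda_1=1/\sigma_1'$, let $\gamma'\downarrow0$ and pass to the limit, or read $S$ as its closure), which matches the first coordinate exactly. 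For each $i$ the quantity $\big|\lambda_i-1/(\sigma_i'+\tau_1)\big|=\big|1/(\sigma_i'+\tau_i)-1/(\sigma_i'+\tau_1)\big|$ is a V-shaped function of $\tau_i$ vanishing at $\tau_i=\tau_1$, so over the envelope it is attained at an endpoint: the upper endpoint gives $1/(\sigma_i'+\tau_1)-1/(\sigma_1'+\tau_1)$, which is nonincreasing in $\tau_1$ and hence $\le 1/\sigma_i'-1/\sigma_1'$, while the lower endpoint gives $\tfrac{(\sigma_1'-\sigma_i')\tau_1}{\sigma_i'(\sigma_1'+\tau_1)(\sigma_i'+\tau_1)}$, which is $\le 1/\sigma_i'-1/\sigma_1'$ because after clearing denominators the inequality collapses to $0\le\sigma_1'\sigma_i'+\sigma_i'\tau_1+\tau_1^2$. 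Thus $\max_i\big|\lambda_i-1/(\sigma_i'+\gamma')\big|\le\max_i\big(1/\sigma_i'-1/\sigma_1'\big)\le\max_{i>j}\big|1/\sigma_i'-1/\sigma_j'\big|$, and substituting into the first display closes the argument.

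I expect the crux to be the middle step: distilling from the polytope inequalities the clean two-sided envelope for the $\tau_i$ and checking that neither edge of it pushes $\big|\lambda_i-1/(\sigma_i'+\tau_1)\big|$ past the spectral range $1/\sigma_n'-1/\sigma_1'$ — already for $n=2$ the worst configuration is a vertex of $\mathcal R$ rather than an interior point, which is precisely what forces one to examine both endpoints. The only other point requiring care is the boundary case $\lambda_1=1/\sigma_1'$, where the matching parameter is $\gamma=\gamma_0\notin(\gamma_0,\infty)$ and the estimate must be obtained by a limiting argument (or by taking the closure of $S$ in the statement).
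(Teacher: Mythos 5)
Your proof is correct, and at the top level it follows the same strategy as the paper's: reduce to the scalar quantity $\max_i\left|\lambda_i - 1/(\sigma_i'+\gamma')\right|$ via orthogonality of $U$ (note you correctly read the $U\Sigma U^Ty$ in the statement as $U\Lambda U^Ty$, consistent with the paper's own display \eqref{eqn:min}), then choose the single $\gamma'$ that matches one coordinate of the relaxed solution exactly and control the remaining coordinates using the polytope constraints. The difference lies in how that control is achieved. The paper picks $\gamma_j = 1/\lambda_j - \sigma_j'$ for a generic $j$ and asserts the chain $\max_{i\neq j}\left|\lambda_i - 1/(\sigma_i'+\gamma_j)\right| < \max_{i\neq j}\lambda_i\left|1-\sigma_i'/\sigma_j'\right| < \max_{i>j}\left|1/\sigma_i'-1/\sigma_j'\right|$ with only an informal ``worst case'' remark justifying the first inequality; that is precisely the step where the chain constraints of $\mathcal{R}$ must enter, and the paper never actually invokes them. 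Your change of variables $\tau_i = 1/\lambda_i - \sigma_i'$, the telescoped envelope $\frac{\sigma_i'}{\sigma_1'}\tau_1 \le \tau_i \le \tau_1 + (\sigma_1'-\sigma_i')$, and the endpoint analysis of the V-shaped deviation supply exactly the missing justification, and both endpoint checks (the monotonicity in $\tau_1$ at the upper end, the cleared-denominator inequality $0\le\sigma_1'\sigma_i'+\sigma_i'\tau_1+\tau_1^2$ at the lower end) are correct. Your treatment of the boundary case $\tau_1=0$, where the matching parameter sits at $\gamma=\gamma_0$ and one must pass to a limit, is also a point the paper silently ignores. In short: same skeleton, but your middle step turns the paper's sketch into a complete argument.
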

\begin{proof}
Following similar steps to prove Lemma \ref{lemma}, one can show that the maximal difference between a solution
$\beta_\Gamma$ for a given $\Gamma$ and its corresponding closest $\beta_{\hat\gamma}$ is
\begin{equation}\label{eqn:min}
\min_\gamma
\| U\Lambda U^T y - (M + \hat\gamma I_n)^{-1}y\|_2
\leq
\min_\gamma \|y\|_2 \max_{i=1,\ldots,n}\left|\lambda_i - \frac{1}{\sigma_i' + \gamma}\right|
\end{equation}
For any
$\lambda_i\neq\lambda_j$, the value
\begin{equation}
\min_\gamma \max\left\{\left|\lambda_i - \frac{1}{\sigma_i' - \gamma'}\right|, \left|\lambda_j - \frac{1}{\sigma_j' - \gamma'}\right|\right\}
\end{equation}
is bounded by the worst case scenario that the
solution
$\gamma$
passes through
$\lambda_i$
or through
$\lambda_j$. Then
\begin{align}
\min_\gamma
\max_i \left| \lambda_i - \frac{1}{\sigma_i' + \gamma}\right|
&\leq \max_{i\neq j} \left| \lambda_i - \frac{1}{\sigma_i' + \gamma_j}\right|\\
&< \max_{i\neq j} \lambda_i\left| 1 - \frac{\sigma_i'}{\sigma_j'}\right|\\
&< \max_{i> j} \left| \frac{1}{\sigma_i'} - \frac{1}{\sigma_j'}\right|\label{eqn:max}
\end{align}
where $\gamma_j$ satisfies $\lambda_j= 1/(\sigma_j'+\gamma_j)$. That is, $\gamma_j = 1/\lambda_j - \sigma_j'$ which is greater than zero by construction. Hence for all $\Gamma\in\mathbb{R}^n$, there exists $\hat\gamma$ such that
\begin{equation}
\|\beta_\Gamma - \beta_{\hat\gamma}\|_2 \leq \|y\|_2\max_{i> j} \left| \frac{1}{\sigma_i'} - \frac{1}{\sigma_j'}\right|
\qedhere
\end{equation}
\end{proof}

\subsection{Reformulation of ridge regression}
Let $\mathcal{D}=\{(x_{i1},\ldots,x_{ip},y_i)\}_{i=1}^n\subset \mathbb{R}^p\times\mathbb{R}$ be a given data set. The ridge regression estimator $f(x) = x^T\beta$ with $\beta\in\mathbb{R}^p$ minimizes the regularized loss function
\begin{equation}\label{eq:beta}
\hat{\beta} = \argmin_\beta \sum_{i=1}^n \ell(y_i - x_i^T\beta) + \frac{\gamma}{2}\|\beta\|_2^2
\end{equation}
where $\ell:\mathbb{R}\to\mathbb{R}_{>0}$ is some loss function.

\begin{proposition}[KKT conditions]
Set $\ell(z):=z^2$ and fix $\gamma>0$. For $\beta\in\mathbb{R}^p$
to be the unique global minimizer of \eqref{eq:beta}, it is necessary and sufficient that $\beta$ satisfies
\begin{equation}
KKT(\beta|\gamma, \mathcal{D})~:~
(X^TX + \gamma I_p)\beta = X^Ty,
\end{equation}
where
$X$ is the $n\times p$ design matrix and $y\in\mathbb{R}^n$ is the response vector formulated from $\mathcal{D}$.
\end{proposition}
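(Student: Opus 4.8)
The plan is to recognize \eqref{eq:beta} with $\ell(z)=z^2$ as an unconstrained smooth \emph{strictly} convex minimization problem, so that the ``KKT condition'' is nothing more than the vanishing of the gradient, and then to argue that this stationarity equation is simultaneously necessary and sufficient for the unique global minimizer — necessity from first-order optimality, sufficiency and uniqueness from convexity.

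First I would put the objective in matrix form: stacking the rows $x_i^T$ into the $n\times p$ design matrix $X$ and the responses into $y\in\mathbb{R}^n$, the function to be minimized is
\[
F(\beta)\;=\;\|y-X\beta\|_2^2\;+\;\tfrac{\gamma}{2}\|\beta\|_2^2.
\]
Then I would compute that $F$ is everywhere differentiable with $\nabla F(\beta) = -2X^T(y-X\beta)+\gamma\beta = (2X^TX+\gamma I_p)\beta - 2X^Ty$ and constant Hessian $\nabla^2 F(\beta)=2X^TX+\gamma I_p$. Since $X^TX\succeq 0$ and $\gamma>0$, this Hessian is positive definite; hence $F$ is strictly convex and coercive, so it admits a \emph{unique} global minimizer.

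Next I would invoke the standard fact that for a differentiable convex function on $\mathbb{R}^p$ a point is a global minimizer if and only if the gradient vanishes there, and that strict convexity makes that minimizer unique — this is precisely the content of the KKT conditions in the unconstrained smooth case. Setting $\nabla F(\beta)=0$ yields $(2X^TX+\gamma I_p)\beta=2X^Ty$, which after dividing through by the harmless overall factor of $2$ (equivalently, under the normalization that makes the data term and penalty commensurable) is exactly the asserted equation $KKT(\beta\,|\,\gamma,\mathcal D):\ (X^TX+\gamma I_p)\beta = X^Ty$. This establishes both implications at once: $\beta$ solves the KKT system $\iff \nabla F(\beta)=0 \iff \beta$ is the unique global minimizer.

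I do not anticipate a genuine obstacle. The single point that deserves care is the step where a stationary point is promoted to a \emph{global} minimizer: this is where convexity — rather than the merely local first-order necessary condition — is essential, and where positive-definiteness of $2X^TX+\gamma I_p$ upgrades ``a minimizer'' to ``the unique minimizer.'' The rest is the routine gradient computation, which I would not belabor.
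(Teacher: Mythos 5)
Your proof is correct and complete: with $\ell(z)=z^2$ the objective is a smooth, strictly convex, coercive function of $\beta$ (constant Hessian $2X^TX+\gamma I_p\succ 0$), so vanishing of the gradient is necessary and sufficient for the unique global minimum; this is the standard argument and there is nothing to add. The paper itself states this proposition without any proof, so there is no alternative route to compare against. The one substantive point you flag --- that the raw stationarity equation reads $(2X^TX+\gamma I_p)\beta = 2X^Ty$, equivalently $(X^TX+\tfrac{\gamma}{2}I_p)\beta = X^Ty$ --- is in fact a normalization mismatch in the paper's own statement: with the data term written as $\sum_i (y_i-x_i^T\beta)^2$ and the penalty as $\tfrac{\gamma}{2}\|\beta\|_2^2$, the displayed $KKT$ equation only comes out exactly if one either halves the data term or reads the $\gamma$ in the $KKT$ system as $\gamma/2$. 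Your observation that this is a harmless rescaling is the right way to dispose of it, since $\gamma$ ranges over all of $(0,\infty)$ and nothing downstream depends on the particular labeling of the regularization parameter.
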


Note that we use the KKT notation in order to hint to the extension to other learning machines, which reduce to solving a similar convex optimization problem but with inequality constraints.

Let
$\mathcal{D}^v=\{(x_{i1}^v,\ldots,x_{ip}^v,y_i^v)\}_{i=1}^{n_v}\subset \mathbb{R}^p\times\mathbb{R}$
be a validation data set. The optimization problem of finding the optimal regularization parameter $\hat\beta$ with respect to a validation performance criterion $\hat{\gamma}$ can then be written as
\begin{equation}
(\hat{\beta},\hat{\gamma})
=
\argmin_{\beta,\gamma>0} \sum_{j=1}^{n_v} \ell(y_j^v - (x^v_j)^T\beta)\text{ s.t. }KKT(\beta|\gamma,\mathcal{D}) = S(\gamma,\beta|\mathcal{D})
\end{equation}
That is, we find the least squared error among all $\beta$'s in the solution set $S$, or equivalently all $\beta$'s satisfying the KKT conditions.

\section{Convex Program}\label{3} 

For the convex approach, we simply replace the non-convex solution set $S(\gamma,\beta|\mathcal{D})$ with its convex relaxation $\mathcal{R}(\Lambda,\beta|\mathcal{D})$. Then one obtains the convex optimization problem
\begin{equation}\label{cp}
(\hat{\beta},\hat{\Lambda})
=
\argmin_{\beta,\Lambda} \sum_{j=1}^{n_v} \ell(y_j^v - (x^v_j)^T\beta)\text{ s.t. } R(\Lambda,\beta|\mathcal{D})
\end{equation}
This has the immediate advantage of simultaneously training and validating (1 step); in comparison the original method requires finding a grid of points $(\hat{\beta},\hat{\gamma})$ in $S$ and then minimizing among those (2 steps). Furthermore, the convex hull is defined by $\mathcal{O}(n)$ equality/inequality constraints, whose complexity is not any higher than the original problem.

For example, \eqref{cp} can be solved with a QP solver when
$\ell(z)=z^2$ as before, or with a LP solver when $\ell(z)=|z|$ (the latter of which may be preferred for sparsenesses or feature selection).

\begin{corollary}
The convex relaxation constitutes the solution path for the modified ridge regression problem
\begin{equation}
\hat{\beta} = \argmin_\beta \sum_{i=1}^n \ell(y_i - x_i^T\beta) + \frac{1}{2}\beta^T(U\Gamma U^T)\beta
\end{equation}
where
$\Gamma=\mathrm{diag}(\gamma_1,\ldots,\gamma_p)$
and
$\gamma_i = \frac{1}{\lambda_k} - \sigma_k$
for all $i=1,\ldots,p$, and the
following inequalities hold by translating \eqref{eqn:cvhull}:
\begin{equation}
\begin{cases}
\gamma_i > 0, &\text{ for all }i=1,\ldots,p\\
\frac{\sigma_{i+1}}{\sigma_i}(\sigma_i + \gamma_i)\geq \sigma_{i+1} + \gamma_{i+1} > \sigma_i + \gamma_i, &\text{ for all }\sigma_{i+1} \geq \sigma_i,~i=1,\ldots,p-1\\
\end{cases}
\end{equation}
\end{corollary}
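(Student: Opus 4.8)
The plan is to differentiate the modified objective, diagonalize the resulting normal equation with the singular value decomposition $X^TX=U\Sigma U^T$, and recognize the closed form of $\hat\beta$ as the equality block of \eqref{eqn:cvhull} after the substitution $\lambda_i=1/(\sigma_i+\gamma_i)$; the inequality blocks of the two descriptions then match by clearing denominators. Throughout I take $\ell(z)=z^2$, as in the KKT proposition and the rest of this section (the general convex $\ell$ case is handled by the same KKT-set bookkeeping), and I take $\gamma_0=0$, the value of $\gamma_0$ merely shifting the admissible range of each $\gamma_i$. I also use the standing assumption of the reduced SVD, that $U$ is orthogonal and all $\sigma_i>0$.

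First I would note that the modified objective $F(\beta)=\sum_i\ell(y_i-x_i^T\beta)+\tfrac12\beta^T(U\Gamma U^T)\beta$ has Hessian $2X^TX+U\Gamma U^T=U(2\Sigma+\Gamma)U^T$, which is positive definite because $2\sigma_i+\gamma_i>0$ for all $i$; hence $F$ is strictly convex and, exactly as in the KKT proposition, vanishing of the gradient is necessary and sufficient for the unique minimizer. Since the modified penalty $\tfrac12\beta^T(U\Gamma U^T)\beta$ replaces the scalar $\tfrac{\gamma}{2}\|\beta\|_2^2$, the stationarity condition is the analogue of $KKT(\beta\mid\gamma,\mathcal D)$ with $\gamma I_p$ replaced by $U\Gamma U^T$, namely $(X^TX+U\Gamma U^T)\hat\beta=X^Ty$. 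Substituting $X^TX=U\Sigma U^T$ and using $U^TU=I_p$ turns this into $U(\Sigma+\Gamma)U^T\hat\beta=X^Ty$, hence $U^T\hat\beta=(\Sigma+\Gamma)^{-1}U^T(X^Ty)$, i.e. $U_i^T\hat\beta=\frac{1}{\sigma_i+\gamma_i}U_i^T(X^Ty)$ for each $i$.

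Next, set $\lambda_i:=1/(\sigma_i+\gamma_i)$, equivalently $\gamma_i=1/\lambda_i-\sigma_i$ (reading the index $k$ in the statement as $i$). The previous display is then precisely the first (equality) block of $\mathcal R(\Lambda,\beta\mid\mathcal D)$, with $X^Ty$ in the role of $y$. It remains to see that, as $\Gamma$ runs over the diagonal matrices obeying the stated inequalities, $\Lambda$ runs over exactly the $\Lambda$-region cut out by the remaining constraints of \eqref{eqn:cvhull}. For each $i$ the map $\gamma_i\mapsto\lambda_i=1/(\sigma_i+\gamma_i)$ is a strictly decreasing bijection of $(0,\infty)$ onto $(0,1/\sigma_i)$, so $\gamma_i>0$ is equivalent to $0<\lambda_i<1/\sigma_i$. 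For the ordering constraints I would substitute $\sigma_i+\gamma_i=1/\lambda_i$ and clear denominators; since $t\mapsto 1/t$ reverses order on the positive reals, the pair $\tfrac{\sigma_{i+1}}{\sigma_i}\lambda_{i+1}\le\lambda_i\le\lambda_{i+1}$ becomes $\sigma_{i+1}+\gamma_{i+1}\le\sigma_i+\gamma_i$ together with $\tfrac{\sigma_{i+1}}{\sigma_i}(\sigma_i+\gamma_i)\ge\sigma_{i+1}+\gamma_{i+1}$, which are the inequalities in the statement after the index swap keeping track of which of $\sigma_i,\sigma_{i+1}$ is larger. Thus $\Gamma\leftrightarrow\Lambda$ is a bijection between the two feasible sets, and under it the minimizer $\hat\beta$ of the modified problem at $\Gamma$ is the $\beta$-coordinate of the corresponding point of $\mathcal R$; letting $\Gamma$ range over all admissible values sweeps out $\mathcal R$, which is the claim.

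The arithmetic is routine; the one conceptual point I expect to be the main obstacle is showing the correspondence is onto the \emph{entire} $\Lambda$-region of $\mathcal R$ — that every point of the convex hull, not only those lying on the original curve $\gamma\mapsto(1/(\sigma_1+\gamma),\dots,1/(\sigma_p+\gamma))$, is realized as the solution of a bona fide ridge problem with strictly positive $\gamma_i$. This is exactly why the penalty must be the anisotropic quadratic form $\tfrac12\beta^T(U\Gamma U^T)\beta$ with independent $\gamma_i$'s: decoupling the per-eigendirection regularization is what inflates the solution set from a one-parameter path to the full polytope $\mathcal R$, and recovering the scalar case amounts to restricting to the diagonal $\gamma_1=\cdots=\gamma_p$. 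If one drops the full-column-rank hypothesis on $X$, the same argument applies on the range of $X^T$, with the penalty supplying coercivity on the orthogonal complement.
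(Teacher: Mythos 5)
The paper states this corollary without any proof (it is simply asserted to follow ``by translating \eqref{eqn:cvhull}''), and your argument---stationarity of the anisotropic objective in the eigenbasis, the substitution $\lambda_i=1/(\sigma_i+\gamma_i)$, and the bijective translation of the inequality blocks, including the observation that decoupling the per-direction penalties is what inflates the one-parameter path to the full polytope---is exactly the intended derivation and is correct. The only blemishes are cosmetic and largely inherited from the paper: the factor-of-$2$ convention in the normal equations (already present in the paper's own KKT proposition), the typo $\gamma_i=1/\lambda_k-\sigma_k$ which you rightly read as $k=i$, and the sense of your intermediate inequality $\tfrac{\sigma_{i+1}}{\sigma_i}(\sigma_i+\gamma_i)\ge\sigma_{i+1}+\gamma_{i+1}$, which is the post-relabeling form (the direct transcription of $\tfrac{\sigma_{i+1}}{\sigma_i}\lambda_{i+1}\le\lambda_i$ before swapping $i\leftrightarrow i+1$ has the opposite direction)---your final reconciliation with the stated inequalities is nonetheless right.
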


\subsection{Generalization to $K$-fold cross-validation}
The above applies to a single training and validation set, and we now extend it to $K$-fold CV in general.
Let $\mathcal{D}_{(k)}$ and $\mathcal{D}_{(k)}^v$ denote the set of training and validation data respectively, corresponding to the $k^{th}$ fold for $k=1,\ldots,K$: that is, they satisfy
\begin{equation}
\bigcup_k \mathcal{D}^v_{(k)} = \mathcal{D},
\qquad
\bigcap_k \mathcal{D}_{(k)} = \bigcap_k \mathcal{D}^v_{(k)} = \emptyset
\end{equation}
Let $n_{(k)} = |\mathcal{D}_{(k)}|$.
 Then in order to tune the parameter $\gamma$ according to $K$-fold CV, we have the optimization problems
\begin{align}
(\hat{\beta}_{(k)},\hat{\gamma})
&=
\argmin_{\beta_{(k)},\gamma>0} \sum_{k=1}^K \frac{1}{n-n_{(k)}} \sum_{(x_j,y_j)\in\mathcal{D}_{(k)}^v} \ell(y_j - x_j^T\beta_{(k)})\nonumber\\
&\qquad\text{ s.t. }KKT(\beta_{(k)}|\gamma,\mathcal{D}_{(k)})
 = S(\gamma,\beta_{(k)}|\mathcal{D}_{(k)})
\end{align}
for all $k=1,\ldots,K$.

Then we need only relax the KKT conditions independently for each $k$. The convex optimization problem according to a $K$-fold CV is
\begin{align}
(\hat{\beta}_{(k)},\hat{\Lambda})
&=
\argmin_{\beta_{(k)},\Lambda^k} \sum_{k=1}^K \frac{1}{n-n_{(k)}} \sum_{(x_j,y_j)\in\mathcal{D}_{(k)}^v} \ell(y_j - x_j^T\beta_{(k)})\nonumber\\
&\qquad\text{ s.t. }\mathcal{R}(\Lambda,\beta_{(k)}|\mathcal{D}_{(k)})
\end{align}
for all $k=1,\ldots,K$, each of which is solved as before, and so with $\mathcal{O}(Kn)$ constraints. Then just as in typical $K$-fold CV, we take the average of the folds
$\hat{\beta}_{avg} = \frac{1}{K}\sum_{k=1}^K \hat{\beta}_{(k)}$
as the final model.

\section{Performance}

We show two simulation studies as a benchmark in order to compare it to current methods.
The first figure below provides intuition behind the solution paths: the curve $S(\lambda,\beta|\gamma_0,M,y)$ and its convex relaxation $\mathcal{R}(\Lambda,\beta,|\gamma_0,M,y)$. In the second figure,
we simulate data with the function
$y
= \operatorname*{sinc}(x)+ \epsilon$,
where
$\epsilon\sim \mathcal N(0,1)$ are sampled i.i.d.,
$n = 50$ observations,
and
$p = 5$.

\hspace{-6.5em}\includegraphics[scale=0.50]{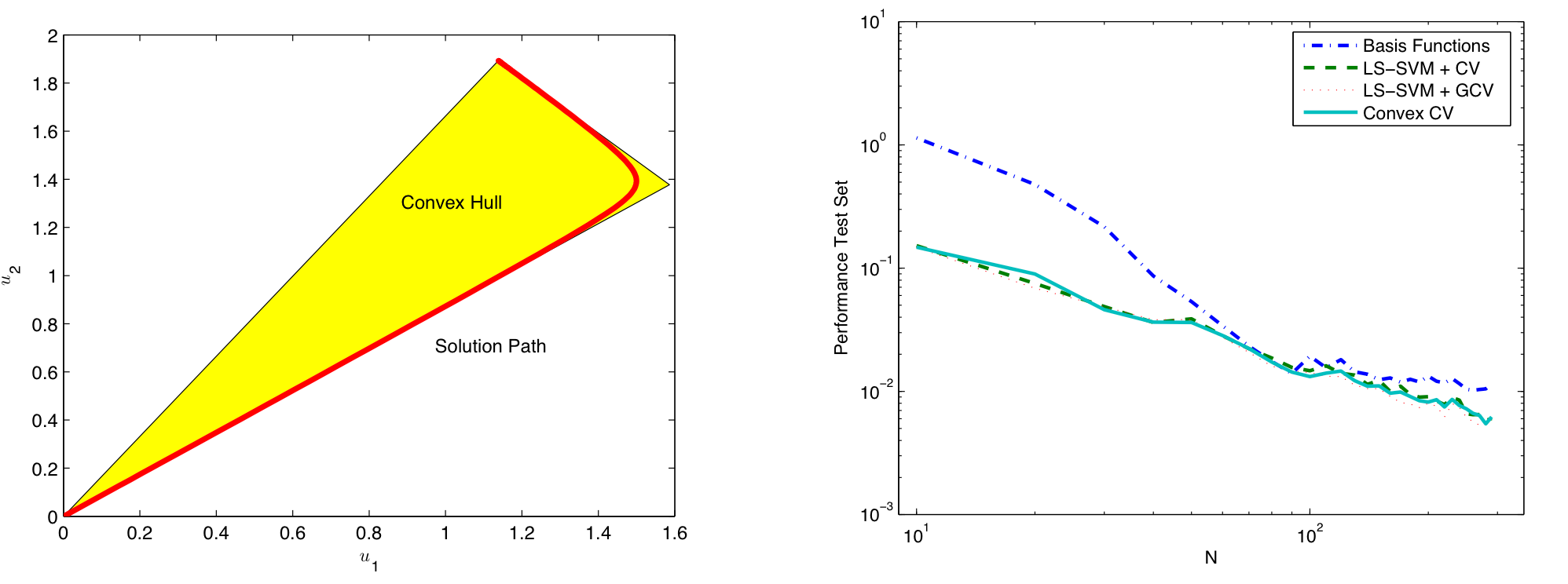}

The figure compares the performance of the method with one which uses basis functions, another with CV, and another with generaized CV (GCV).  CV and GCV are implemented with standard gradient descent, and the convex algorithm outlined here uses the interior point method.
This toy example demonstrates that the relaxation does not result in an increase in true error.

We also conduct a Monte Carlo simulation: every iteration constructs a
simulated model for a given value of
$n$
defined as
\begin{equation}
f_p(x) = w_1x^1 + \cdots + w_px^p
\end{equation}
for random values of
$w=(w_1,\ldots,w_p)\in\mathbb{R}^p$
where
$p=10$,
$w\sim\mathcal N(0,C)$,
and
$C$
is a $p\times p$
covariance matrix
with
 $\|C\|_2=1$.
A data set of size
$n$
is constructed
such that
$\mathcal D = \{(x_{i1},\ldots,x_{ip},y_i)\}_{i=1}^n$
and
\begin{equation}
y_i = f_m(x_i) + \epsilon_i,
\qquad \epsilon_i\sim\mathcal{N}(0,1)
\end{equation}
is sampled i.i.d. for all $i=1,\ldots,n$.

We compare three methods for
tuning the parameter with respect to the ordinary least squares (OLS) estimate:
10-fold CV with gradient descent
(\texttt{RR+CV})
,
generalized CV with gradient descent
(\texttt{RR+GCV})
,
and
10-fold CV criterion which applies the convex method as in (18)
(\texttt{fRR+CV}).
We run it for 20,000 iterations.

\hspace{-8em}\includegraphics[scale=0.45]{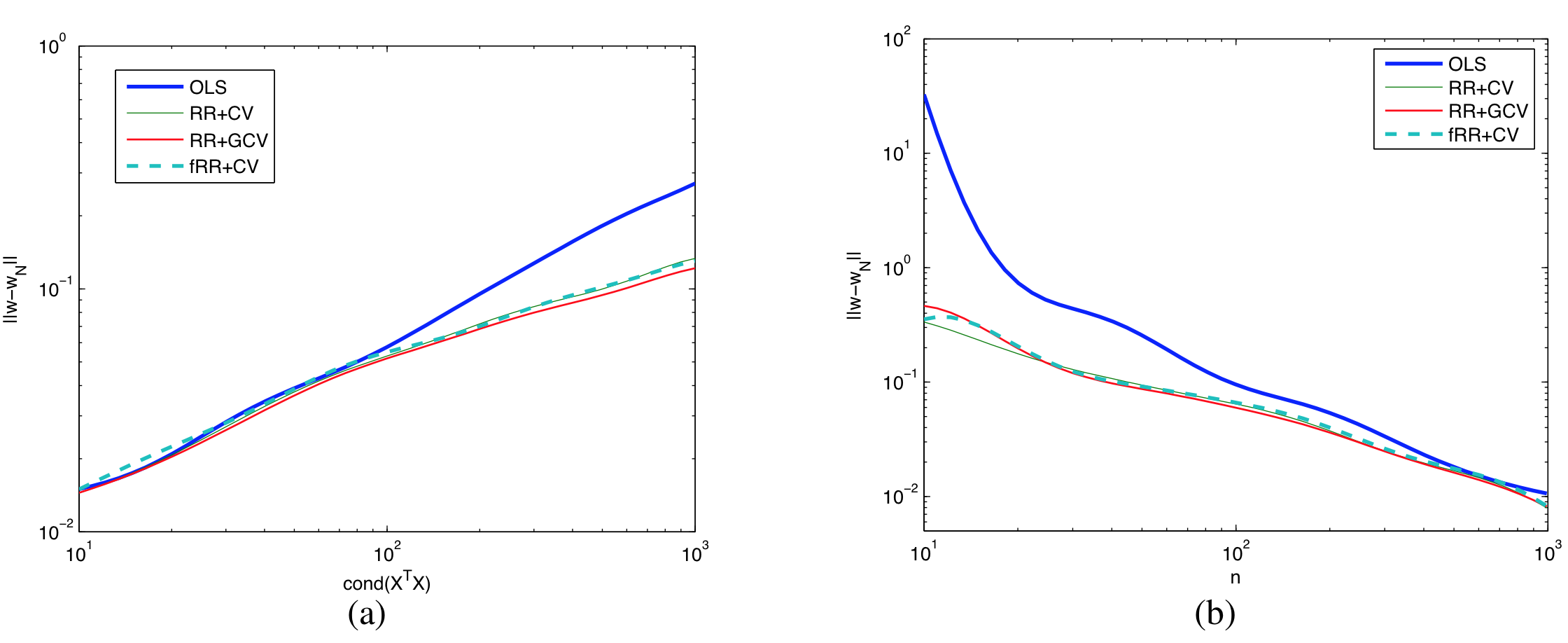}

The figure on the left compares the true error as the condition number grows; the figure on the right compares the true error as the number of observations $n$ increases.

As we can see, the method is comparable to both CV and GCV for variable changes in the data set. We expect that the OLS worsens drastically over ill-conditioned matrices, and our method compensates for that via the optimal tuning parameter which is roughly the same as CV's.

\section{Conclusion} 

Viewing the model selection problem as one in constrained optimization gives rise to a natural approach to tuning the regularization parameter. According to the simulation results, the convex program provides comparable performance to popular methods. Moreover, global optimality is guaranteed and efficient convex algorithms can be employed as suited to the modeling problem at hand. This would especially outperform general purpose techniques when there is a high number of local optima, or if finding each individual solution for fixed regularization parameter is more costly than optimizing it simultaneously with validation as we do here.

Further extensions to this framework can be used as a generic convex hull method \cite{wt},
and it can also be applied for constructing stable kernel machines, feature selection, and other possibilities related to simultaneous training and validating.



\end{document}